\numberwithin{equation}{section}
\numberwithin{figure}{section}
\theoremstyle{plain}
\newtheorem{thm}{\protect\theoremname}[section]
  \theoremstyle{definition}
  \newtheorem{defn}[thm]{\protect\definitionname}
  \theoremstyle{plain}
  \newtheorem{prop}[thm]{\protect\propositionname}
  \theoremstyle{plain}
  \newtheorem{cor}[thm]{\protect\corollaryname}
  \theoremstyle{plain}
  \newtheorem{lem}[thm]{\protect\lemmaname}
  \providecommand{\corollaryname}{Corollary}
  \providecommand{\definitionname}{Definition}
  \providecommand{\lemmaname}{Lemma}
  \providecommand{\propositionname}{Proposition}
\providecommand{\theoremname}{Theorem}
\begin{document}

\title{A Generalization of Ribet's Nondegeneracy Theorem}

\author{Ryan Keast}

\address{Ryan Keast}

\address{Department of Mathematics, Washington University in St. Louis St.
Louis, MO 63130, USA}

\email{rkeast@wustl.edu}

\thanks{This work was supported by the National Science Foundation {[}DMS-1361147
PI: Matthew Kerr{]} . The author would like to thank Matt Kerr and
Greg Pearlstein for reading previous manuscripts and providing immeasurable
support.}
\begin{abstract}
We extend Ribet's Nondegeneracy theorem to all odd weight Hodge structures.
\end{abstract}
\maketitle

\section{Introduction}
 A CM Hodge structure is a Hodge structure whose Mumford-Tate (MT)
 group is abelian. For any period domain, and any Mumford-Tate domain,
 CM Hodge structures are dense in both the Zariski and Archimidean
 topologies.
 In the case where $D$ has a trivial infinitesimal period relation, the Andre-Oort conjecture is essentially a converse to this statement.  Let $D$ be
 an unconstrained Mumford-Tate domain and $\Gamma\backslash D$ be
 a connected Shimura variety. Andre-Oort states that the Zariski closure
 of a subset of CM points in $\Gamma\backslash D$ is itself a Shimura 
 subvariety.
 
 Recently there has been considerable progress in the understanding of non-classical period domains.  Specifically, the work of Griffiths, Green, and Kerr \cite{GGK} has extended the notion of CM-type to a higher weight analogue, called an orientation. To each irreducible CM Hodge structure there is an associated oriented CM field. Using the Galois groups of these CM fields and the density of CM points, \cite{GGK} developed an algorithm for classifying the Mumford-Tate groups for a given period domain.
 Even though this paper does not directly address it, the primary motivation behind our efforts
 is to explore possible generalizations of Andre-Oort and its related
 conjectures to non-classical period domains: what does the Zariski density of CM points imply for a variation of Hodge structure in the higher weight case?
 
 In light of this goal, this paper studies irreducible CM points and Mumford-Tate domains
 contained in higher odd weight period domains.
 
Our first and main result concerns the Mumford-Tate groups of odd
weight polarized irreducible CM Hodge structures. Assume the Hodge
numbers add up to $2p$, where $p$ is an odd prime. Ribet's nondegeneracy
theorem states that in the weight one case the Mumford-Tate group
of such a CM point is a maximal compact torus. We extend this to all
odd weights.
 
  Ribet's proof involves calculating the dimension of a subspace generated by the Galois group acting on a grading element.  The grading element is dependent on the weight and Hodge numbers, so the proof does not immediately extend to higher weights. We sidestep this difficulty with some knowledge of circulant matrices.     
  
  In contrast to the main result, the rest of the paper shows the radical differences that can show up in the higher weight case.  Many of the proofs here use Dodson's Imprimitivity Theorem as inspiration\cite{D}.
  
  Schmid's monodromy theorem
  informs us that in the weight one case, a nilpotent matrix $N$ arising
  from a degeneration of a variation of Hodge structure must satisfy
  $N^{2}=0$ \cite{S}. In the higher weight cases, $N$ with larger Jordan blocks
  may appear. We show that if a Jordan blocks is sufficiently large and the nilpotent orbit passes through a nondegenerate CM point,
  the smallest MT domain than contains the nilpotent orbit is in fact the whole period domain.

Concluding, we provide examples of Calabi-Yau CM points which are never contained
in positive dimension unconstrained MT domains.

\section{Review}

Let $D$ be a period domain with fixed rational vector space $V$
and polarization $Q$.  We construct the following algebraic group:

\begin{equation}
G=\{g\in Aut(V)|Q(gv,gu)=Q(u,v)\:\forall u,v\in V\}.
\end{equation}
We restrict ourselves to the case where the weight is odd so that
we may identify $G$ with $Sp(2n,Q)$. $G_{\mathbb{R}}$ acts transitively
on $D$ and we may identify $D=G_{\mathbb{R}}/H_{\varphi}$, where
$H_{\varphi}$ is the subgroup that fixes some $\varphi\in D$. Similarly
$G_{\mathbb{C}}$ acts transitively on the compact dual $\check{D}$. Consider $G_\mathbb{C}$'s associated Lie algebra: 

\begin{equation}
\mathfrak{g}_{\mathbb{C}}=\{X\in End(V_{\mathbb{C}})|Q(Xu,v)+Q(u,Xv)=0\:\forall u,v\in V\}.\label{eq:-1}
\end{equation}
For a point $\varphi\in D$, $\mathfrak{g}_{\mathbb{C}}$ inherits
a weight $0$ Hodge decomposition:
\begin{equation}
\mathfrak{g}^{l,-l}=\{X\in\mathfrak{g}|X(V^{p,q})\subseteq V^{p+l,q-l}\}.\label{eq:}
\end{equation}

A subvariety of $D$ will be called horizontal if its tangent bundle
lives in the subbundle generated by the orbit of $\mathfrak{g}^{-1,1}$.
From Griffiths's transversality, a subvariety of $D$ that arises from
a variation of polarized Hodge structures will always be horizontal.  A MT domain is considered unconstrained if it is horizontal.

Regarding a Hodge structure $\varphi\in D$ as a real representation of
$\mathbb{C}^{*}$, the first Hodge-Riemann relation indicates $\varphi(S^{1})\subset G_{\mathbb{R}}$.
The smallest $\mathbb{Q}$-algebraic group whose real points contain
the image of the circle $\varphi(S^{1})$ is called the Mumford-Tate
group $M_{\varphi}$ of $\varphi$. 
\begin{defn}
A weight $n$ oriented CM field is a pair 
\begin{equation}
{\textstyle \left(L,\left\{ \prod^{p,q}\right\} _{p+q=n}\right)}
\end{equation}
where $L$ is a CM field and $\left\{ \prod^{p,q}\right\} _{p+q=n}$
is a partition of the set of embeddings of $L$ into $\mathbb{C}$
with the following property: 
\begin{equation}
{\textstyle \theta\in\prod^{p,q}\Leftrightarrow\bar{\theta}\in\prod^{q,p}}.
\end{equation}
\end{defn}

To obtain the Hodge structure, we view the CM field $L$ itself as
a $\mathbb{Q}$ vector space. An element $l\in L$ acts on $L$ by
left multiplication, the action decomposes $L\otimes\mathbb{C}$ into
one dimensional $\theta_{k}(l)$ eigenspaces we label $E_{\theta_{k}}$.
We gain a Hodge decomposition by setting
\begin{equation}
V^{p,q}=\oplus_{\theta_{k}\in\prod^{p,q}}E_{\theta_{k}}.
\end{equation}

\begin{thm}[cf. \cite{GGK} pg 196]
Every Hodge structure
constructed in the previous manner is a polarizable CM Hodge structure
{[}not necessarily irreducible{]} and every odd weight irreducible
polarized CM Hodge structure comes from such a construction. 
\end{thm}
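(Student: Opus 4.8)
The plan is to treat the two assertions separately: first that the construction always produces a polarizable CM Hodge structure, and then the converse, that every irreducible odd weight polarized CM Hodge structure arises this way. For the forward direction I would simply verify the Hodge axioms from the construction. Since $\left\{\prod^{p,q}\right\}$ partitions all the embeddings of $L$, the eigenspace decomposition $L\otimes\mathbb{C}=\oplus_k E_{\theta_k}$ regroups into $V_{\mathbb{C}}=\oplus_{p+q=n}V^{p,q}$. Because $L$ acts $\mathbb{Q}$-linearly, multiplication commutes with complex conjugation on $V_{\mathbb{C}}$, so for $v\in E_{\theta_k}$ one computes $l\cdot\bar v=\overline{\theta_k(l)}\,\bar v=\bar\theta_k(l)\bar v$, giving $\overline{E_{\theta_k}}=E_{\bar\theta_k}$; combined with the orientation symmetry $\theta\in\prod^{p,q}\Leftrightarrow\bar\theta\in\prod^{q,p}$ this yields $\overline{V^{p,q}}=V^{q,p}$. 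The torus $\mathrm{Res}_{L/\mathbb{Q}}\mathbb{G}_m$ acts with the $E_{\theta_k}$ as eigenlines and preserves the decomposition, so $M_{\varphi}$ is contained in this torus and is abelian, making the structure CM.

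For polarizability I would produce a form of trace type. Choosing $\xi\in L$ with $\bar\xi=-\xi$ and setting $Q(x,y)=\mathrm{Tr}_{L/\mathbb{Q}}(\xi x\bar y)$ gives a nondegenerate $\mathbb{Q}$-bilinear form that is alternating (using oddness of the weight), and whose Hodge--Riemann positivity on each summand is governed by the signs of $\mathrm{Im}\,\theta_k(\xi)$. The real content is to select $\xi$ so that these signs come out correctly at every embedding simultaneously; I would arrange this by weak approximation together with an explicit sign prescription. This is one of the two places where genuine care is required, rather than formal verification.

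For the converse I would begin with an irreducible polarized CM Hodge structure $(V,\varphi,Q)$ of odd weight. By Schur's lemma the algebra of Hodge endomorphisms $L=\mathrm{End}_{\mathrm{HS}}(V)$ is a division algebra, and since the Mumford--Tate group is abelian it is commutative, hence a number field; a dimension count via the double centralizer then forces $V$ to be one-dimensional over $L$, so that $\dim_{\mathbb{Q}}V=[L:\mathbb{Q}]$. The $L$-action diagonalizes $V_{\mathbb{C}}$ into eigenlines $E_{\theta_k}$ indexed by the embeddings of $L$, and since each $V^{p,q}$ is $L$-stable, every $E_{\theta_k}$ lies in a unique $V^{p,q}$; setting $\prod^{p,q}=\{\theta_k:E_{\theta_k}\subseteq V^{p,q}\}$ recovers a partition, with $\overline{E_\theta}=E_{\bar\theta}$ and $\overline{V^{p,q}}=V^{q,p}$ delivering the orientation symmetry.

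The step I expect to be the crux is showing that $L$ is a \emph{CM} field, and this is exactly where the odd weight hypothesis earns its keep. The Rosati involution attached to $Q$ restricts to an involution of $L$ fixing a totally real subfield, so a priori $L$ is either totally real or CM. But if $L$ were totally real, every embedding would satisfy $\bar\theta=\theta$, and applying $\overline{V^{p,q}}=V^{q,p}$ to the eigenline $E_\theta$ would force $p=q$, which is impossible when $p+q=n$ is odd. Hence $L$ must be totally imaginary over its maximal totally real subfield, i.e.\ CM, and $(V,\varphi)$ is exhibited as arising from the oriented CM field $\left(L,\left\{\prod^{p,q}\right\}\right)$.
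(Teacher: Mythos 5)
The paper does not actually prove this theorem: it is stated as a quotation, ``cf.\ \cite{GGK} pg 196,'' so there is no internal argument to measure your proposal against. Judged on its own, your sketch reconstructs what is essentially the standard argument of the cited source (and of classical Shimura--Taniyama theory), and it is correct in outline: the trace form $Q(x,y)=\mathrm{Tr}_{L/\mathbb{Q}}(\xi x\bar y)$ with $\bar\xi=-\xi$ chosen by weak approximation with prescribed signs of $\mathrm{Im}\,\theta_k(\xi)$ is exactly how polarizability is established, and the converse via the endomorphism field ($V$ irreducible over the abelian Mumford--Tate action forces $\mathrm{End}_{\mathrm{HS}}(V)$ to be a field $L$ with $\dim_L V=1$) followed by the eigenline decomposition and the induced partition is the standard route. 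Two glossed steps deserve flags. First, in the forward direction you verify skew-symmetry and positivity but not the first Hodge--Riemann relation $Q(V^{p,q},V^{p',q'})=0$ unless $(p',q')=(q,p)$; for the trace form this is automatic, since $Q(E_{\theta_j},E_{\theta_k})=0$ unless $\theta_k=\bar\theta_j$, and this computation is also what makes $Q$ a Hodge--Galois polarization in the paper's later terminology. Second, and more substantively, the dichotomy ``$L$ is totally real or CM'' is not a formal consequence of possessing an involution with some fixed subfield --- a real quadratic extension of a totally real field carries a Galois involution yet is neither --- it requires the \emph{positivity} of the Rosati involution, which comes from the second Hodge--Riemann relation of the given polarization (Albert's classification, or a direct trace-positivity argument). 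Similarly, commutativity of $\mathrm{End}_{\mathrm{HS}}(V)$ is not a consequence of abelianness of $M_{\varphi}$ alone (the commutant of the scalars is everything); it is irreducibility that forces the commutant to coincide with the image of the group algebra of $M_{\varphi}$. Once these standard inputs are invoked explicitly, your parity argument ruling out the totally real case --- $\overline{E_\theta}=E_\theta$ would force $p=q$, impossible for odd $p+q$ --- is correct and is precisely where the odd-weight hypothesis enters.
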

An irreducible CM Hodge structure $\varphi\in D$ will then give an
arithmetically defined basis of $V_{\mathbb{C}}$ that respects the
Hodge structure.
\begin{defn}
Let $L$ be a CM field regarded as $\mathbb{Q}$-vector space. The
Hodge-Galois basis of $L\otimes_{\mathbb{Q}}\mathbb{C}$ is the basis
$\{w_{k}\}$ with $w_{k}\in E_{\theta_{k}}$, scaled so that for $l\in L$,
$l=\sum\theta_{k}(l)w_{k}$ when considered as an element in $L\otimes_{\mathbb{Q}}\mathbb{C}$. 
\end{defn}
Via the action on the embeddings, $\mathrm{Gal}(L^{c}/\mathbb{Q})$ acts on
the Hodge-Galois basis. Since $L^{c}$ is Galois, every embedding
has the same image. We fix some embedding $\Theta:\: L^{c}\rightarrow\mathbb{C}$.
For $a\in\Theta(L^{c})$ , $a$= $\Theta(l)$ for some $l\in L^{c}$.
We define for $g\in \mathrm{Gal}(L^{c}/\mathbb{Q})$, $g(a)=\Theta(gl).$ For
$a_{k}\in\Theta(L^{c})$, we have

\begin{equation}
g\left(\sum a_{k}w\right)=\sum g(a_{k})g(w_{k}).
\end{equation}

By the construction of the Hodge-Galois basis, $\sum a_{k}w$ is in
the original rational vector space $L$ if and only if it is fixed
by $\mathrm{Gal}(L^{c}/\mathbb{Q}).$

A polarization of a CM Hodge structure will be called a Hodge-Galois
polarization if the following condition is met:

\begin{equation}
Q(w_{l},\bar{w}_{k})=0\mbox{ unless \ensuremath{l=k}}.
\end{equation}

This condition makes it much easier to explicitly handle $\mathfrak{g}_{\mathbb{C}}$.
Luckily, for irreducible CM Hodge structures this condition is trivial.
\begin{prop}[\cite{GGK} pg. 199]
Every polarization of an irreducible CM Hodge structure
is a Hodge-Galois polarization.
\end{prop}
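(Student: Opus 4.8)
The plan is to reduce the statement to a single compatibility between the CM field $L$ and the form $Q$, and then to read off the vanishing from an eigenvalue comparison. First I would record the book-keeping identity $\bar{w}_k = w_{\bar{k}}$, where $\bar{k}$ denotes the index with $\theta_{\bar{k}} = \overline{\theta_k}$: since multiplication by any $l\in L$ is defined over $\mathbb{Q}$ it commutes with complex conjugation on $V_{\mathbb{C}} = L\otimes_{\mathbb{Q}}\mathbb{C}$, so conjugation carries the eigenspace $E_{\theta_k}$ onto $E_{\overline{\theta_k}}$; comparing with the defining expansion $l = \sum_k \theta_k(l)w_k$ then pins down $\bar{w}_k = w_{\bar{k}}$. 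Thus the Hodge--Galois condition is equivalent to showing that $Q(w_l,\bar{w}_k)$ vanishes whenever $l\neq k$.

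The key object is the $Q$-adjoint of the $L$-action. For $a\in L$ write $m_a$ for multiplication by $a$; by nondegeneracy of $Q$ there is a unique operator $m_a^{\dagger}$ with $Q(m_a u,v)=Q(u,m_a^{\dagger}v)$. I would first argue that $m_a^{\dagger}$ again lies in $L$: since $Q$ is a morphism of Hodge structures $V\otimes V\to\mathbb{Q}(-n)$, the $Q$-adjoint of an endomorphism of the Hodge structure is again one (it commutes with $\varphi(S^1)$, hence with $M_{\varphi}$), and irreducibility forces the Hodge-endomorphism algebra to be exactly $L$ — it is a division algebra containing $L$, and the evaluation map $e\mapsto e\cdot v_0$ at a nonzero $v_0$ is injective, giving a dimension count against $\dim_{\mathbb{Q}}V=[L:\mathbb{Q}]$. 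Hence $a\mapsto\sigma(a):=m_a^{\dagger}$ is a $\mathbb{Q}$-algebra involution of $L$.

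The heart of the argument — and the step I expect to be the main obstacle — is to identify $\sigma$ with the CM involution $a\mapsto\bar{a}$. Here I would invoke positivity: the second Hodge--Riemann bilinear relation makes $\sigma$ a \emph{positive} involution, in the sense that $\mathrm{Tr}_{L/\mathbb{Q}}(a\,\sigma(a))>0$ for $a\neq0$; this is the Rosati-positivity in the present weight-$n$ setting, and faithfully translating the Hodge--Riemann positivity on $V_{\mathbb{C}}$ into trace-positivity on $L$ is the delicate point. The only positive involutions of a number field are the identity on a totally real field and complex conjugation on a CM field; since $L$ is totally imaginary, $\sigma=\mathrm{id}$ is impossible, so $\sigma$ must be complex conjugation. (This is also precisely where irreducibility is essential, since without $\mathrm{End}_{\mathrm{HS}}(V)=L$ the adjoint need not act through $L$ at all, and the mere Hodge-type orthogonality of the $E_{\theta_k}$ is too coarse to force the off-diagonal terms to vanish.)

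With $\sigma(a)=\bar{a}$ in hand the conclusion is immediate. Using the defining property $\theta_k(\bar{a})=\overline{\theta_k(a)}$ of the CM involution one computes $\sigma(a)\bar{w}_k=\bar{a}\,w_{\bar{k}}=\theta_{\bar{k}}(\bar{a})\,w_{\bar{k}}=\overline{\theta_k(\bar{a})}\,\bar{w}_k=\theta_k(a)\,\bar{w}_k$, whence
\begin{equation}
\theta_l(a)\,Q(w_l,\bar{w}_k)=Q(m_a w_l,\bar{w}_k)=Q(w_l,\sigma(a)\bar{w}_k)=\theta_k(a)\,Q(w_l,\bar{w}_k)
\end{equation}
for every $a\in L$. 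Thus $\bigl(\theta_l(a)-\theta_k(a)\bigr)Q(w_l,\bar{w}_k)=0$ for all $a$; when $l\neq k$ the embeddings $\theta_l$ and $\theta_k$ are distinct, so some $a$ satisfies $\theta_l(a)\neq\theta_k(a)$ and therefore $Q(w_l,\bar{w}_k)=0$, which is exactly the Hodge--Galois condition.
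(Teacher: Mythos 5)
The paper itself offers no argument for this proposition --- it is imported verbatim from \cite{GGK}, pg.\ 199 --- so there is no internal proof to measure you against; your proposal has to stand on its own. Its skeleton is the classical Rosati-involution argument, and the steps you actually execute all check out: the identity $\bar{w}_k = w_{\bar{k}}$ follows exactly as you say; the $Q$-adjoint of a Hodge-structure endomorphism is again one (your reasoning is sound: $\varphi(S^1)$ preserves $Q$, so conjugation by it commutes with taking $Q$-adjoints, and commuting with $\varphi(S^1)$ is equivalent to preserving the Hodge decomposition); Schur's lemma plus your evaluation-map dimension count does force $\mathrm{End}_{\mathrm{HS}}(V)=L$, so $\sigma(a)=m_a^{\dagger}$ is a well-defined $\mathbb{Q}$-algebra involution of $L$; and the closing eigenvalue comparison is airtight, correctly using that complex conjugation on a CM field commutes with every embedding.

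The genuine gap is the step you flag but never carry out: deducing that $\sigma$ is \emph{positive} from the second Hodge--Riemann relation. This cannot simply be asserted, because for weight $n>1$ the naive Hermitian form $Q(u,\bar{v})$ is indefinite --- its sign on $V^{p,q}$ varies with $p-q \bmod 4$ --- so $\mathrm{Tr}_{L/\mathbb{Q}}(a\,\sigma(a))$ is not visibly a sum of positive terms, and this is exactly where higher weight differs from the abelian-variety case. The repair is the Weil operator $C$, acting by $i^{p-q}$ on $V^{p,q}$: the second Hodge--Riemann relation says precisely that $H(u,v):=Q(Cu,\bar{v})$ is a positive definite Hermitian form on $V_{\mathbb{C}}$. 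Since $m_a$ is defined over $\mathbb{Q}$ and preserves the Hodge decomposition, it commutes with both $C$ and complex conjugation, and a one-line computation shows its $Q$-adjoint coincides with its $H$-adjoint. Hence $\mathrm{Tr}_{L/\mathbb{Q}}(a\,\sigma(a)) = \mathrm{Tr}_{V_{\mathbb{C}}}\bigl(m_a\,(m_a)^{*}\bigr)$, the adjoint now taken with respect to a positive definite form, and this is a Hilbert--Schmidt norm squared, strictly positive for $a\neq 0$. With that paragraph inserted (together with a reference for the classification of positive involutions of number fields, of which you use the field case of Albert's theorem; the exclusion of $\sigma=\mathrm{id}$ via $\mathrm{Tr}_{L/\mathbb{Q}}(a^2)<0$ for purely imaginary $a$ is fine), your proof is complete and is almost certainly the intended one.
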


For odd weight period domains with polarization $Q$, $\mathfrak{g}_{\mathbb{C}}\cong\mathfrak{sp}(2n,Q)$.
The Hodge-Galois polarization allows us to explicitly construct an
arithmetically defined Hodge basis of $\mathfrak{g}_{\mathbb{C}}$. 

Let $\hat{w}_{nm}$ indicate the endomorphism that takes $w_{m}$
to $w_{n}$ and kills everything else. Denoting $Q_{i}=Q(w_{i},\bar{w_{i}})$,
we can form a Hodge basis of $\mathfrak{g}_{\mathbb{C}}$
\begin{equation}
\left\{ \hat{w}_{i,j}+\frac{Q_{i}}{-Q_{j}}\hat{w}_{-j,-i}\right\} .
\end{equation}

If the $\alpha_{i,j}$'s are in $\Theta(L^{c})$, $\mathrm{Gal}(L/Q)$ also
has natural action given by 
\begin{equation}
g\left(\sum\alpha_{i,,j}\left(\hat{w}_{i,j}+\frac{Q_{i}}{-Q_{j}}\hat{w}_{-j,-i}\right)\right)=\sum\left(g(\alpha_{i,,j})\hat{w}_{g(i),g(j)}+\frac{Q_{g(i)}}{-Q_{g(j)}}\hat{w}_{g(-j),g(-i)}\right).
\end{equation}

Again we have $\sum\alpha_{i,,j}\left(w_{i,j}+\frac{Q_{i}}{-Q_{j}}w_{-j,-i}\right)$is
in the original rational vector space if and only if it is fixed by
$\mathrm{Gal}(L^{c}/\mathbb{Q})$.

\section{Nondegeneracy}
\begin{defn}
Let $\varphi\in D$ be a polarized CM Hodge structure. We say $\varphi$
is nondegenerate if $\dim(M_{\varphi})=\dim_{\mathbb{Q}}(V)/2$
\end{defn}

The Ribet's nondegeneracy theorem says that if the
weight is one and $\dim(V)=2p$ with $p>2$ and prime, then being irreducible
implies nondegenerate \cite{R2}.
\begin{thm}[The Generalized Ribet Non-degeneration Theorem]
Let $(V,Q,\varphi)$
be an irreducible polarized odd weight CM Hodge structure. If $\dim(V)=2p$
where $p>2$ is prime, then $\dim(M_{\varphi})=p$.\end{thm}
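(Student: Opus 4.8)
The plan is to reduce the statement to a rank computation for a circulant matrix. Since $\varphi$ is an irreducible odd-weight polarized CM Hodge structure, by the structure theorem (\cite{GGK} pg.~196) it arises from an oriented CM field $(L,\{\prod^{a,b}\})$ with $[L:\mathbb{Q}]=\dim_{\mathbb{Q}}V=2p$. Because the weight is odd, $M_{\varphi}$ is a subtorus of a maximal torus of $G\cong Sp(2p,Q)$, whose rank is $p$; hence $\dim M_{\varphi}\le p$ automatically, and it suffices to prove the reverse inequality. Viewing $\varphi(S^{1})$ through its grading element $Y$, which acts on the Hodge-Galois line $E_{\theta}$ by the scalar $\frac{1}{2}(a-b)$ when $\theta\in\prod^{a,b}$, the cocharacter lattice of $M_{\varphi}$ is the smallest $\mathrm{Gal}(L^{c}/\mathbb{Q})$-stable subspace containing $Y$ (using the rational structure described in the Review). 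Thus $\dim M_{\varphi}$ equals the dimension of the $\mathbb{Q}$-span of the Galois orbit $\{gY\}$, and the whole problem becomes: show this orbit spans the full $p$-dimensional (anti-invariant, since $y_{\bar{\theta}}=-y_{\theta}$) cocharacter space.

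Next I would exploit the permutation structure. The conjugate pairs $\{\theta,\bar{\theta}\}$ form a block system of $p$ blocks permuted transitively by $\mathrm{Gal}(L^{c}/\mathbb{Q})$, with complex conjugation $c$ central and fixed-point-free. Since $p$ is prime, some $\sigma$ induces a $p$-cycle on the blocks, and there are two cases: either $\sigma$ is a single $2p$-cycle (so $\sigma^{p}=c$), or $\sigma$ consists of two $p$-cycles (so $\sigma^{p}=1$, in which case I use $\langle\sigma,c\rangle$ instead). Ordering the embeddings along $\sigma$, the translates $\sigma^{j}Y$ are precisely the rows of a circulant matrix---of size $2p$ in the first case and, after using $cY=-Y$, of size $p$ in the second---whose rank I can read off from the values of the generating polynomial $g(x)=\sum_{k}y_{k}x^{k}$ at the relevant roots of unity. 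The anti-invariance $y_{k+p}=-y_{k}$ makes $g(x)=(1-x^{p})h(x)$ in the first case, forcing automatic vanishing at the $p$-th roots of unity and capping the rank at $p$.

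The rank then equals $p$ minus the number of remaining roots of unity annihilating $g$ (equivalently $h$), and two inputs finish the count. First, oddness of the weight: each $a-b$ is an odd integer, and the value of $h$ (resp.\ $g$) at the rational root $x=-1$ (resp.\ $x=1$) is, up to a factor of two, a signed sum of $p$ odd integers, hence odd and nonzero---so the rational root never contributes. Second, the remaining candidates are the primitive roots, which form a single Galois orbit as roots of the \emph{irreducible} cyclotomic polynomial $\Phi_{2p}$ (resp.\ $\Phi_{p}$) of degree exactly $p-1$; since $h$ (resp.\ $g$) has rational coefficients and degree at most $p-1$, it either shares none of these roots or is a scalar multiple of that cyclotomic polynomial. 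This is exactly the role played by circulant matrices together with the primality of $p$: it collapses the possibilities to $\dim M_{\varphi}\in\{1,p\}$.

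The last and hardest step is to rule out $\dim M_{\varphi}=1$, which would occur precisely when the grading polynomial is proportional to the cyclotomic factor. In that degenerate case the grading element has constant absolute value with a sign pattern that is coherent under $\sigma$, and I would argue that this forces the orientation to be induced from the CM type of an imaginary quadratic subfield of $L$---the only proper CM subfields available when $[L:\mathbb{Q}]=2p$---so that $\varphi$ would be imprimitive and hence reducible, contrary to hypothesis. Making this imprimitivity precise, in the spirit of Dodson's imprimitivity theorem \cite{D}, is the main obstacle; once it is excluded, the rank is forced to be $p$, giving $\dim M_{\varphi}=p=\dim_{\mathbb{Q}}(V)/2$ and nondegeneracy.
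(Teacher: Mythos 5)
Your first three paragraphs are, in substance, the paper's own proof: Galois translates of the grading element under an order-$p$ element form a circulant matrix, whose rank is $p-\deg\gcd(f,x^{p}-1)$; oddness of the weight makes every coefficient of $f$ odd, killing the rational root; and irreducibility of the degree-$(p-1)$ cyclotomic polynomial (this is where primality of $p$ enters) leaves exactly two possibilities: full rank $p$, or $f$ proportional to the cyclotomic polynomial, i.e.\ all eigenvalues along the $\sigma$-orbit equal. (One small imprecision: in the degenerate case the circulant argument gives only the vacuous bound $\dim M_{\varphi}\ge 1$, not $\dim M_{\varphi}=1$, so the dichotomy is ``rank $p$ or no information.'') The genuine gap is your final paragraph. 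You propose to close the degenerate case by contradiction, claiming it forces the orientation to be induced from an imaginary quadratic subfield and hence forces reducibility. That implication is false, not merely hard to make precise. The degenerate case says the orientation is stable under \emph{left} translation by $\sigma$ (it is a single $\sigma$-orbit $O$, with $cO$ its complement), whereas being induced from a subfield is governed by stability of the corresponding union of cosets under \emph{right} translation by the subgroup fixing that subfield; these conditions coincide only when the Galois group is abelian. Concretely, let $L_{0}$ be a cyclic cubic field and $L=L_{0}(\sqrt{-\beta})$ for generic totally positive $\beta\in L_{0}$, so that $\mathrm{Gal}(L^{c}/\mathbb{Q})\cong(\mathbb{Z}/2\mathbb{Z})^{3}\rtimes\mathbb{Z}/3\mathbb{Z}$ with complex conjugation $c$ the central element negating all three square roots. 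Then $L$ has no quadratic subfield at all; an order-$3$ element $\sigma$ has exactly two orbits $O$, $cO$ on the six embeddings; and the CM type $\Phi=O$ is primitive, since the element flipping a single square root sends $O$ to a set meeting both $O$ and $cO$, so $\{O,cO\}$ is not a block system. The associated Hodge structure is therefore irreducible, lies squarely in your degenerate case, and is nondegenerate (by Ribet); scaling the grading element by any odd integer produces irreducible examples of any odd weight with the same feature. So no contradiction with irreducibility can ever be extracted, and your last step cannot be repaired in the form stated.

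What the paper does instead --- and what your argument needs --- is a reduction rather than a contradiction. If all the eigenvalues $A_{m}$ along the orbit equal one odd integer $A$, the Hodge decomposition has exactly two pieces, and $\varphi(z)=\psi(z^{A})$ where $\psi$ is the weight-one CM Hodge structure on the same $(L,Q)$ with CM type $O$. Since $z\mapsto z^{A}$ is surjective on $S^{1}$, we get $\varphi(S^{1})=\psi(S^{1})$ as subgroups of $G_{\mathbb{R}}$, hence the same Mumford--Tate group and the same rational sub-Hodge structures; irreducibility of $\varphi$ gives irreducibility of $\psi$, and Ribet's original weight-one theorem yields $\dim M_{\varphi}=\dim M_{\psi}=p$. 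Keep your first three paragraphs, but replace the attempted imprimitivity contradiction with this invocation of Ribet's theorem.
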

\begin{proof}
Let $F$ be the associated CM field, and let $F^{c}$ be its Galois
closure. Since $p$ is prime and divides $|\mathrm{Gal}(F^{c})|$, basic group
theory informs us that $\mathrm{Gal}(F^{c}/\mathbb{Q})$ contains the cyclic
subgroup $\mathbb{Z}_{p}$. We write the generator of $\mathbb{Z}_{p}$
as $(0,1,2...p-1)$.

Regarding a Hodge structure $\varphi\in D$ as a representation of
$\mathbb{C}^{*}$, the first Hodge-Riemann relation indicates $\varphi(S^{1})\subset G_{\mathbb{R}}$.
Let $M_{\varphi}$ be the special Mumford-Tate group. Let $\mathfrak{m}_{\mathbb{C}}\subseteq\mathfrak{g}_{\mathbb{C}}$
be the associated Lie algebra of $M_{\varphi}(\mathbb{C})$. $\varphi'(1)\in\mathfrak{m}_{\mathbb{C}}$
has the property that given $x\in\mathfrak{g}^{l,-l}$ 
\begin{equation}
[\frac{1}{2}\varphi'(1),x]=lx\label{eq:-2}
\end{equation}
We express the grading element as a sum of the Hodge-Galois basis
of $\mathfrak{g}_{\mathbb{C}}$: $\frac{1}{2}\varphi'(1)=\sum A_{m}\hat{w}_{mm}-\hat{w}{}_{-m-m}$.

Since the weight is odd and $[\frac{1}{2}\varphi'(1),w_{m,-m}]=A_{m}w_{m,-m}$, we can conclude that all the $A_{m}$ are odd
integers. 

Since the $\mathfrak{m}$ must be defined over $\mathbb{Q}$ , it
must contain $g(\varphi'(1))$ for all $g\in \mathrm{Gal}(F^{C}).$ Since it
contains the cyclic subgroup $\mathbb{Z}_{p}$, the dimension of $\mathfrak{m}$
must be at least the size of the rank of the following circulant matrix:
\[
\left(\begin{array}{ccccc}
A_{0} & \cdots & \cdots & \cdots & A_{p-1}\\
A_{1} & A_{2} & \cdots & A_{p-1} & A_{0}\\
\vdots & \ddots &  &  & \vdots\\
\vdots &  & \ddots &  & \vdots\\
A_{p-1} & \cdots & \cdots & \ddots & A_{p-2}
\end{array}\right).
\]

As a general fact of circulant matrices, the rank is given
$p-deg[gcd(f,x^{p}-1)]$ where $f=A_{0}+A_{1}x+\cdots+A_{p-1}x^{p-1}$\cite{I}.
Since all of the coefficients of $f$ are odd numbers, $1$ is not
a root of $f$. $x^{p-1}+x^{p-2}+...+1$ is the minimal polynomial
of the $p^{th}$ roots of unity. If it shared a root with $f$ it
would have to be a rational multiple of $x^{p-1}+x^{p-2}+...+1$.
So either the rank is $p$ or $A_{0}=A_{1}=...=A_{p-1}$. The latter
case recovers a weight one grading element, and is thus then covered
by the original Ribet Nondegeneracy theorem.
\end{proof}

\section{Imprimitive Domains and MT Domains}

Let $\varphi\in D$ be irreducible and CM with associated CM field
$L$. Let $v\in\mathfrak{g}_{\mathbb{Q}}.$ Consider $v$ as a matrix
defined by the ordered Hodge-Galois basis $\left(w_{1},...w_{n},w_{-1}...,w_{-n}\right)$
of $V_{\mathbb{C}}$. We construct a graph in the following manner:
the vertices are defined to be the elements of the Hodge-Galois basis.
Two vertices are connected by an edge if either entry $a_{i,j}$ or
$a_{j,i}$ in $M$ are non-zero. {[}Note: Because it is fixed under
complex conjugation and must respect the Hodge-Galois polarization,
$a_{i,j}$ and $a_{j,i}$ are either both non-zero or both zero.{]}
The connected components of this graph gives us a partition of the
Hodge-Galois basis. 
\begin{defn}
For $v\in\mathfrak{g}_{\mathbb{Q}}$ $\Gamma_{v}$ and $\pi_{v}$
are respectively the graph and partition constructed above. Using
the same set of vertices, for a Mumford-Tate group $M$, we define
the following graph $\Gamma_{M}$: Two vertices in $\Gamma_{M}$ are
connected if and only if there is a $v\in\mathfrak{m}_{\mathbb{Q}}$
where the vertices are connected in $\Gamma_{v}$. In an identical
manner we derive a partition $\pi_{M}$.\end{defn}
\begin{thm}
The partitions $\pi_{v}$ and $\pi_{M}$ both give an imprimitive
system for the $\mathrm{Gal}(L^{c}/\mathbb{Q})$ action on the Hodge basis
$\left(w_{1},...w_{n},w_{-1}...,w_{-n}\right)$.\end{thm}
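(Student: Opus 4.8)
The plan is to show that the permutation action of $\mathrm{Gal}(L^{c}/\mathbb{Q})$ on the Hodge-Galois basis carries each of the graphs $\Gamma_{v}$ and $\Gamma_{M}$ to itself, and then to invoke the elementary fact that a graph automorphism permutes the connected components of the graph. Since a partition whose blocks are permuted by a transitive group action is precisely a system of imprimitivity, this yields the theorem. First I would record that the action is transitive: the $2n$ vertices are indexed by the $2n=[L:\mathbb{Q}]$ embeddings of $L$ into $\mathbb{C}$, and $\mathrm{Gal}(L^{c}/\mathbb{Q})$ permutes these transitively, as they are the images of a primitive element of $L$ under the Galois group, i.e.\ the roots of an irreducible polynomial.

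The crux is a compatibility statement read off from the Galois action formula of the Review section: for every $v\in\mathfrak{g}_{\mathbb{Q}}$ and every $g\in\mathrm{Gal}(L^{c}/\mathbb{Q})$ one has $\Gamma_{g(v)}=g(\Gamma_{v})$, where $g(\Gamma_{v})$ is the graph obtained by relabeling vertices through $g$. Writing $v=\sum\alpha_{i,j}(\hat{w}_{i,j}+\frac{Q_{i}}{-Q_{j}}\hat{w}_{-j,-i})$, the displayed action moves the coefficient attached to the pair $(i,j)$ to the pair $(g(i),g(j))$, and since $g$ acts on coefficients by a field automorphism it sends nonzero entries to nonzero entries; the polarization factor $\frac{Q_{i}}{-Q_{j}}$ is a nonzero scalar and so is irrelevant to the vanishing pattern. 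I expect the only delicate point to be the index bookkeeping rather than the graph theory. Here one must use that $L$ is a CM field, so that complex conjugation is central in $\mathrm{Gal}(L^{c}/\mathbb{Q})$; this is exactly what guarantees $g(-i)=-g(i)$, and hence that $g$ sends the symplectic basis vector indexed by $(i,j)$ to the one indexed by $(g(i),g(j))$ without disturbing the pairing of $i$ with $-i$.

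Granting the compatibility, both assertions follow at once. For $\pi_{v}$: since $v\in\mathfrak{g}_{\mathbb{Q}}$ is Galois-fixed, $g(\Gamma_{v})=\Gamma_{g(v)}=\Gamma_{v}$ for all $g$, so each $g$ is an automorphism of $\Gamma_{v}$ and therefore permutes its connected components, giving $g(\pi_{v})=\pi_{v}$. For $\pi_{M}$: the adjacency relation of $\Gamma_{M}$ is assembled from those of the graphs $\Gamma_{v}$ with $v\in\mathfrak{m}_{\mathbb{Q}}$, and every such $v$ is again rational, hence Galois-fixed; so each $\Gamma_{v}$, and therefore $\Gamma_{M}$, is $g$-invariant, and the same connected-components argument gives $g(\pi_{M})=\pi_{M}$. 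As both partitions are preserved by the transitive $\mathrm{Gal}(L^{c}/\mathbb{Q})$-action, both are systems of imprimitivity, which is the claim.
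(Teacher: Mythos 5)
Your proposal is correct and takes essentially the same approach as the paper: the paper's one-sentence proof simply observes that an alteration of the partition would contradict the Galois-invariance of rational elements $v\in\mathfrak{g}_{\mathbb{Q}}$ (resp.\ $\mathfrak{m}_{\mathbb{Q}}$), which is exactly the mechanism you spell out via $\Gamma_{g(v)}=g(\Gamma_{v})$ and the fact that a graph automorphism permutes connected components. You merely make explicit the details the paper leaves implicit (transitivity of the Galois action on embeddings, the coefficient relabeling from the Review section's action formula, and the role of centrality of complex conjugation), so the underlying argument is identical.
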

\begin{proof}
An alteration of the partition would contradict the fact that for
all $v\in\mathfrak{g}_{\mathbb{Q}}$, $v$ is fixed by $g\in \mathrm{Gal}(L^{c}/\mathbb{Q})$.\end{proof}
\begin{cor}
\label{cor:Trivial parition}Let $N\in\mathfrak{m}_{\mathbb{Q}}$
be nilpotent, if $N^{l-1}\neq0$ and $l>[L:\mathbb{Q}]/2$ then the
partition $\pi_{N}$ is trivial.\end{cor}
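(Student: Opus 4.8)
The plan is to exploit the fact that the connected-component decomposition underlying $\pi_N$ forces $N$ to be block-diagonal, and then to bound the size of each block by combining nilpotency with the divisibility constraint coming from the preceding imprimitivity theorem. Note first that since $V\cong L$ as $\mathbb{Q}$-vector spaces we have $[L:\mathbb{Q}]=\dim_{\mathbb{Q}}(V)=2n$, so the hypothesis reads $l>n$, and the Hodge-Galois basis $(w_1,\dots,w_n,w_{-1},\dots,w_{-n})$ has exactly $2n$ elements on which $\mathrm{Gal}(L^c/\mathbb{Q})$ acts transitively (the Galois group acts transitively on the embeddings of $L$, using irreducibility of $\varphi$). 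Also, since $N^{l-1}\neq 0$ we have $N\neq 0$, so the partition into singletons is excluded and ``trivial'' here can only mean the single-block partition, i.e.\ that $\Gamma_N$ is connected.

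The key structural observation I would make is that $N$ is block-diagonal with respect to $\pi_N$. Indeed, by the definition of $\Gamma_N$ together with the bracketed remark that $a_{i,j}$ and $a_{j,i}$ vanish simultaneously, any two basis vectors lying in distinct connected components $C,C'$ satisfy $a_{i,j}=a_{j,i}=0$. Hence $N$ carries the span of each component into itself, so writing $N_C$ for the restriction of $N$ to the span of $C$ we get $N^{l-1}=\bigoplus_C N_C^{\,l-1}$. Since $N^{l-1}\neq 0$, some component must satisfy $N_C^{\,l-1}\neq 0$.

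I would then bound the common block size. Each $N_C$ is nilpotent on a space of dimension $d:=|C|$, so $N_C^{\,d}=0$; combined with $N_C^{\,l-1}\neq 0$ this gives $d\geq l>n$. By the imprimitivity theorem $\pi_N$ is a block system for a transitive action, so all components share a common size $d$ and $d$ divides $2n$. The only divisor of $2n$ exceeding $n$ is $2n$ itself (for if $d\mid 2n$ and $d>n$ then $1\le 2n/d<2$, forcing $2n/d=1$), whence $d=2n$. Thus there is a single block, $\Gamma_N$ is connected, and $\pi_N$ is trivial.

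The argument is essentially forced once block-diagonality is recognized, so I do not anticipate a serious obstacle. The one point demanding care is the transitivity of the Galois action on the full $2n$-element basis, together with the implicit fact that in a block system all blocks are equinumerous: this is exactly what upgrades the bound $d\ge l$ to the divisibility $d\mid 2n$ and lets the numerics close. Both are standard permutation-group facts once irreducibility of $\varphi$ secures transitivity, so the real content of the proof is the reduction to block-diagonal form above.
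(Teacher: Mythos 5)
Your proof is correct and follows essentially the same route as the paper's: you bound the nilpotency degree by the size of a connected component via block-diagonality, use transitivity of the Galois action to make all blocks of $\pi_N$ equinumerous, and then observe that a block of size exceeding $[L:\mathbb{Q}]/2$ must be the whole set. The paper compresses this into three sentences, while you supply the details (the direct-sum decomposition of $N^{l-1}$ and the divisibility argument $d\mid 2n$, $d>n\Rightarrow d=2n$) that the paper leaves implicit.
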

\begin{proof}
The degree of $N$ is bounded by the largest size of a connected component
$\Gamma_{N}$. Since the group action is transitive, each primitive
domain must have the same number of elements. If a domain of imprimitivity
contains more than half of the elements, it must be the whole set.\end{proof}
\begin{lem}
\textup{\label{lem:Non-degenerate}Assume $\varphi\in D$ is CM and
nondegenerate and odd weight. Let $M$ be a Mumford-Tate group whose
Mumford-Tate domain passes through $\varphi$. Assume $v\in\mathfrak{m}_{\mathbb{C}}$
. We express $v$ using the Hodge-Galois basis of $\mathfrak{g}_{\mathbb{C}}$:
$v=\sum\alpha_{i,,j}\left(\hat{w}_{i,j}+\frac{Q_{i}}{-Q_{j}}\hat{w}_{-j,-i}^{w}\right).$
For each $\alpha_{i,j}\neq0$, $\left(\hat{w}_{i,j}+\frac{Q_{i}}{-Q_{j}}\hat{w}_{-j,-i}\right)\in\mathfrak{m}_{\mathbb{C}}$. }\end{lem}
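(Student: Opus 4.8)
The plan is to use nondegeneracy to force $M_\varphi$ to be a maximal torus and then run a weight-space decomposition of $\mathfrak{m}_{\mathbb{C}}$ under the adjoint action of that torus. First I would record that, since $\varphi$ is nondegenerate and of odd weight, its Mumford-Tate group $M_\varphi$ is a torus of dimension $\dim_{\mathbb{Q}}(V)/2 = n$, hence a maximal torus of $G \cong Sp(2n,Q)$; its Lie algebra $\mathfrak{m}_\varphi$ is then a Cartan subalgebra $\mathfrak{t}$ of $\mathfrak{g}_{\mathbb{C}}$, spanned by the diagonal Hodge-Galois vectors $\hat{w}_{i,i} - \hat{w}_{-i,-i}$. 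Because the Mumford-Tate domain of $M$ passes through $\varphi$, we have $\varphi(S^1) \subseteq M_{\mathbb{R}}$ and therefore $M_\varphi \subseteq M$; consequently $\mathfrak{t} = \mathfrak{m}_\varphi \subseteq \mathfrak{m}_{\mathbb{C}}$ and, since $\mathfrak{m}_{\mathbb{C}}$ is the Lie algebra of $M$, it is stable under $\mathrm{Ad}(M_\varphi)$.

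The next step is to identify the Hodge-Galois basis of $\mathfrak{g}_{\mathbb{C}}$ with the root data of $\mathfrak{t}$. The torus $M_\varphi$ acts diagonally on $V_{\mathbb{C}}$ in the Hodge-Galois basis, say $t\cdot w_k = \chi_k(t)\, w_k$. Since the Hodge-Galois polarization pairs $w_i$ with $w_{-i}$ via complex conjugation and $M_\varphi$ preserves $Q$ up to a similitude character, the product $\chi_i\chi_{-i}$ is the same character for every $i$. A direct computation of $\mathrm{Ad}(t)$ on $\hat{w}_{i,j}$ then shows that each combined basis vector $e_{i,j} := \hat{w}_{i,j} + \frac{Q_i}{-Q_j}\hat{w}_{-j,-i}$ is a simultaneous $\mathrm{Ad}(M_\varphi)$-eigenvector with character $\chi_i/\chi_j$ — the relation $\chi_i\chi_{-i} = \chi_j\chi_{-j}$ is exactly what makes the two summands share a single weight. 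The diagonal vectors $e_{i,i}$ carry weight zero and span $\mathfrak{t}$, while the off-diagonal $e_{i,j}$ realize the root spaces of $\mathfrak{g}_{\mathbb{C}}$ relative to $\mathfrak{t}$.

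I would then invoke maximality: because $M_\varphi$ is a maximal torus, the weight-zero space of $\mathfrak{g}_{\mathbb{C}}$ is exactly $\mathfrak{t}$, the nonzero weights $\chi_i/\chi_j$ are precisely the roots, each of multiplicity one, and distinct off-diagonal pairs give distinct characters. Decomposing $v = v_0 + \sum_{\lambda\neq 0} v_\lambda$ into $\mathrm{Ad}(M_\varphi)$-weight components and using that $\mathfrak{m}_{\mathbb{C}}$ is $\mathrm{Ad}(M_\varphi)$-stable, each $v_\lambda$ lies in $\mathfrak{m}_{\mathbb{C}}$. For $\lambda \neq 0$ the corresponding weight space is the line $\mathbb{C}\,e_{i,j}$, so $v_\lambda = \alpha_{i,j}\,e_{i,j}$ and $\alpha_{i,j}\neq 0$ forces $e_{i,j}\in\mathfrak{m}_{\mathbb{C}}$; the weight-zero part already lies in $\mathfrak{t}\subseteq\mathfrak{m}_{\mathbb{C}}$, which disposes of the diagonal terms $e_{i,i}$.

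The main obstacle I anticipate is the claim that distinct off-diagonal basis vectors carry distinct $\mathrm{Ad}(M_\varphi)$-weights: this is where nondegeneracy is genuinely used, since it guarantees $M_\varphi$ is maximal and hence that the characters $\chi_i$ separate the root spaces into distinct lines. Without maximality two basis vectors could fall in a common weight space, and the conclusion that each $\alpha_{i,j}\,e_{i,j}$ individually lies in $\mathfrak{m}_{\mathbb{C}}$ would break down. A secondary technical point to verify is that complex conjugation matches $w_i$ with $w_{-i}$ under $Q$, so that $\chi_i\chi_{-i}$ is independent of $i$ and the $e_{i,j}$ are honest weight vectors.
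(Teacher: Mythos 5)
Your proposal is correct and takes essentially the same route as the paper: nondegeneracy plus CM makes $\mathfrak{m}_{\varphi,\mathbb{C}}$ a Cartan subalgebra of $\mathfrak{sp}(2n,Q)$, the off-diagonal Hodge-Galois vectors $\hat{w}_{i,j}+\frac{Q_{i}}{-Q_{j}}\hat{w}_{-j,-i}$ are its root vectors, and stability of $\mathfrak{m}_{\mathbb{C}}$ under the adjoint action of this Cartan (you phrase it via the torus $M_\varphi$, the paper via its Lie algebra, which is the same decomposition) forces each nonzero component of $v$ into $\mathfrak{m}_{\mathbb{C}}$. Your write-up simply makes explicit the weight computation, the inclusion $M_\varphi\subseteq M$, and the one-dimensionality of the root spaces, all of which the paper's terser proof leaves implicit.
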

\begin{proof}
Since $\varphi$ is CM, $M_{\varphi}$ is abelian. Because of nondegeneracy,
$\mathfrak{m}_{\varphi,\mathbb{C}}$ is maximal and Abelian, hence
is a Cartan subalgebra of $\mathfrak{sp}(2n,Q)$. From \cite{GGK}
we know that for $i\neq j$ $\left(\hat{w}_{i,j}+\frac{Q_{i}}{-Q_{j}}\hat{w}_{-j,-i}\right)$
is a root vector. For $i=j$ , $\left(\hat{w}_{i,i}-\hat{w}_{-i,-i}\right)$
is in the Cartan. Under closure of the Lie bracket, the adjoint action
of the Cartan means that for each $\alpha_{i,j}\neq0$, $\left(\hat{w}_{i,j}+\frac{Q_{i}}{-Q_{j}}\hat{w}_{-j,-i}\right)\in\mathfrak{m}_{\mathbb{C}}.$ \end{proof}
\begin{lem}
\textup{\label{lem:Assume--is}Assume $\varphi\in D$ is CM, nondegenerate,
and odd weight. Let $M$ be a Mumford-Tate group whose Mumford-Tate
domain passes through $\varphi$. Let $v\in\mathfrak{m_{\mathbb{Q}}}.$
If $\pi_{v}$ is trivial, then $\Gamma_{M}$ contains the complete
graph and $\mathfrak{m}=\mathfrak{g}$.}\end{lem}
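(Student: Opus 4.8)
The plan is to use the root–space structure of $\mathfrak{sp}(2n,Q)$ together with closure of $\mathfrak{m}_{\mathbb{C}}$ under the Lie bracket. First I would record the standing structure. Since $\varphi$ is nondegenerate, the argument of Lemma \ref{lem:Non-degenerate} shows that $\mathfrak{m}_{\varphi,\mathbb{C}}$ is a Cartan subalgebra $\mathfrak{h}$ of $\mathfrak{g}_{\mathbb{C}}$, spanned by the diagonal elements $\hat{w}_{i,i}-\hat{w}_{-i,-i}$; because the Mumford-Tate domain of $M$ passes through $\varphi$ we have $M_{\varphi}\subseteq M$, hence $\mathfrak{h}\subseteq\mathfrak{m}_{\mathbb{C}}$. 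Consequently $\mathfrak{m}_{\mathbb{C}}$ is stable under $\mathrm{ad}(\mathfrak{h})$ and therefore decomposes as $\mathfrak{h}$ together with a collection of the one-dimensional root spaces spanned by the Hodge-Galois vectors $E_{i,j}:=\hat{w}_{i,j}+\frac{Q_{i}}{-Q_{j}}\hat{w}_{-j,-i}$, where $(i,j)$ ranges over pairs of distinct indices in $\{\pm 1,\ldots,\pm n\}$.

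Next I would translate the triviality of $\pi_{v}$ into a statement about which root vectors lie in $\mathfrak{m}_{\mathbb{C}}$. Writing $v=\sum\alpha_{i,j}E_{i,j}\in\mathfrak{m}_{\mathbb{Q}}\subseteq\mathfrak{m}_{\mathbb{C}}$, the $(i,j)$ matrix entry of $v$ is nonzero exactly when $\alpha_{i,j}\neq 0$, and Lemma \ref{lem:Non-degenerate} then forces $E_{i,j}\in\mathfrak{m}_{\mathbb{C}}$. Thus every edge $\{i,j\}$ of $\Gamma_{v}$ records that both $E_{i,j}$ and $E_{j,i}$ lie in $\mathfrak{m}_{\mathbb{C}}$, and $\pi_{v}$ trivial means precisely that the graph recording these present root vectors is connected on all $2n=2p$ vertices.

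The heart of the argument is to upgrade connectivity to completeness. The key computation is that for suitably distinct indices $[E_{i,k},E_{k,j}]=E_{i,j}$, so whenever $E_{i,k}$ and $E_{k,j}$ belong to $\mathfrak{m}_{\mathbb{C}}$ so does $E_{i,j}$. Iterating this along any path in the connected graph shows $E_{i,j}\in\mathfrak{m}_{\mathbb{C}}$ for every pair of distinct indices; the long-root vectors $\hat{w}_{i,-i}$ are recovered separately from a bracket $[E_{i,k},E_{k,-i}]$, which I would check is a nonzero multiple of $\hat{w}_{i,-i}$. Since $\mathfrak{m}_{\mathbb{C}}$ then contains $\mathfrak{h}$ and every root space it equals $\mathfrak{g}_{\mathbb{C}}$, whence $\mathfrak{m}=\mathfrak{g}$. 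Finally, because $\mathfrak{m}_{\mathbb{Q}}=\mathfrak{g}_{\mathbb{Q}}=\mathfrak{sp}(2p,\mathbb{Q})$ is Zariski dense and each off-diagonal entry is a coordinate not identically zero on $\mathfrak{sp}$, every pair of vertices is joined by some rational $v$, so $\Gamma_{M}$ is the complete graph.

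The main obstacle is the bookkeeping in the symplectic root system at the last step: I must verify that the brackets $[E_{i,k},E_{k,j}]$ genuinely produce the asserted root vectors with the correct constants $\frac{Q_{i}}{-Q_{j}}$ and without spurious cancellation, and in particular that the long roots $2e_{i}$ (the self-conjugate vectors $\hat{w}_{i,-i}$) and the roots $e_{i}+e_{j}$ are all reached, not merely the difference roots $e_{i}-e_{j}$. This requires choosing the intermediate index $k$ to avoid the degenerate collisions $k=\pm i,\pm j$, where extra terms survive or the bracket vanishes; the hypothesis $2p\geq 6$ guarantees enough vertices to route every path around such collisions, so the induction on path length goes through.
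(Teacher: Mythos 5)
Your proposal is correct and takes essentially the same route as the paper: Lemma \ref{lem:Non-degenerate} turns each edge of the connected graph $\Gamma_{v}$ into a root vector $\hat{w}_{i,j}+\frac{Q_{i}}{-Q_{j}}\hat{w}_{-j,-i}$ lying in $\mathfrak{m}_{\mathbb{C}}$, and Lie-bracket closure of the form $[E_{i,k},E_{k,j}]=E_{i,j}$ propagates adjacency along paths until every root space (long roots included) is obtained, giving $\mathfrak{m}=\mathfrak{g}$. The only differences are cosmetic: you conclude $\mathfrak{m}=\mathfrak{g}$ first and completeness of $\Gamma_{M}$ second (the paper argues in the reverse order), and you make explicit the collision and long-root bookkeeping that the paper's bracket computation leaves implicit.
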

\begin{proof}
$\Gamma_{v}$ has a single connected component. By construction, $\Gamma_{M}$
has a single connected component. By Lemma \ref{lem:Non-degenerate}
if the vertices $w_{i}$ and $w_{j}$ are connected in $\Gamma_{v}$,
then 
\[
\hat{w}_{i,j}+\frac{Q_{i}}{-Q_{j}}\hat{w}_{-j,-i}\in\mathfrak{m_{\mathbb{C}}}\mbox{\ensuremath{.}}
\]
 Assume vertices $l$ and $m$ are both connected to vertex $k$.
By closure under Lie bracket we have 
\[
\left[\left(w_{l,k}+\frac{Q_{l}}{-Q_{k}}\hat{w_{-k,-l}}\right),\left(w_{k,m}+\frac{Q_{i}}{-Q_{j}}\hat{w_{-m,-k}}\right)\right]=\left(w_{l,m}+\frac{Q_{l}}{-Q_{m}}w_{-m,-l}\right)\in\mathfrak{m}_{\mathbb{C}}.
\]

It follows that there exists $v'\in\mathfrak{m}_{\mathbb{Q}}$ where
$w_{l}$ and $w_{m}$ are connected in $\Gamma_{v'}$ and hence $\Gamma_{M}$
. Proceeding inductively, we conclude from $\Gamma_{M}$ having a
single connected component that it contains the complete graph. Invoking
\ref{lem:Non-degenerate} again, we have that the entire Hodge basis
of $\mathfrak{g}_{\mathbb{C}}$ is contained in $\mathfrak{m}_{\mathbb{C}}$.\end{proof}
\begin{thm}
Assume $\varphi\in D$ be CM and nondegenerate {[}nondegenerate implies
irreducible{]}. Assume $M$ is a Mumford-Tate group whose Mumford-Tate
domain passes through $M$. Let $N\in\mathfrak{m}_{\mathbb{Q}}$ be
nilpotent. If $N^{l-1}\neq0$ with $l>\dim(V)/2$ then $\mathfrak{m}_{\mathbb{C}}=\mathfrak{g}_{\mathbb{C}}$. \end{thm}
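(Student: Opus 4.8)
The plan is to recognize this theorem as essentially a synthesis of Corollary \ref{cor:Trivial parition} and Lemma \ref{lem:Assume--is}, so that the work lies in correctly matching hypotheses rather than in a new argument. First I would observe that, because the CM Hodge structure arises from viewing the CM field $L$ as a $\mathbb{Q}$-vector space, we have $\dim_{\mathbb{Q}}(V) = [L:\mathbb{Q}]$; consequently the threshold $l > \dim(V)/2$ in the statement is literally the threshold $l > [L:\mathbb{Q}]/2$ appearing in Corollary \ref{cor:Trivial parition}. Since $N \in \mathfrak{m}_{\mathbb{Q}}$ is nilpotent with $N^{l-1} \neq 0$, this places us exactly in the hypothesis of that corollary.

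Second, I would invoke Corollary \ref{cor:Trivial parition} to conclude that the partition $\pi_N$ is trivial. The content of this step is already packaged there: a nonzero $N^{l-1}$ forces a connected component of $\Gamma_N$ of size at least $l$, and since the $\mathrm{Gal}(L^c/\mathbb{Q})$ action is transitive, all blocks of the imprimitivity system have equal size, so a block exceeding half of the $[L:\mathbb{Q}]$ vertices must be the entire vertex set.

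Third, with $\pi_N$ trivial in hand, I would apply Lemma \ref{lem:Assume--is} directly, taking $v = N$. Its hypotheses --- $\varphi$ CM, nondegenerate, and odd weight (the last being the standing assumption of the paper, which is also what gives $\mathfrak{g}_{\mathbb{C}} \cong \mathfrak{sp}(2n,Q)$), together with $N \in \mathfrak{m}_{\mathbb{Q}}$ and $\pi_N$ trivial --- are all satisfied. The lemma then yields that $\Gamma_M$ contains the complete graph and $\mathfrak{m} = \mathfrak{g}$, which is the desired conclusion $\mathfrak{m}_{\mathbb{C}} = \mathfrak{g}_{\mathbb{C}}$.

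Since the heavy lifting is carried by the cited results, I do not expect a genuine obstacle at the level of this theorem; the only point demanding care is the bookkeeping identification $\dim(V) = [L:\mathbb{Q}]$ that aligns the two size thresholds, together with the implicit reliance --- through Lemma \ref{lem:Assume--is} and in turn Lemma \ref{lem:Non-degenerate} --- on nondegeneracy making $\mathfrak{m}_{\varphi,\mathbb{C}}$ a Cartan subalgebra. That structural fact is what lets a single element whose graph is fully connected propagate, via Lie brackets of the root vectors $\hat{w}_{i,j}+\tfrac{Q_i}{-Q_j}\hat{w}_{-j,-i}$, to all of $\mathfrak{g}_{\mathbb{C}}$.
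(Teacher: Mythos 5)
Your proposal is correct and takes exactly the paper's approach: the paper's entire proof is the single line ``Follows from Lemma \ref{lem:Assume--is} and Corollary \ref{cor:Trivial parition},'' which is precisely the two-step synthesis you describe. Your added bookkeeping --- the identification $\dim_{\mathbb{Q}}(V)=[L:\mathbb{Q}]$ that aligns the threshold $l>\dim(V)/2$ with the corollary's $l>[L:\mathbb{Q}]/2$, and the check that nondegeneracy feeds Lemma \ref{lem:Non-degenerate} through Lemma \ref{lem:Assume--is} --- is detail the paper leaves implicit, filled in correctly.
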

\begin{proof}
Follows from Lemma \ref{lem:Assume--is} and Corollary \ref{cor:Trivial parition}.\end{proof}

We now give examples of CM points that are never contained in nilpotent orbits or positive dimension unconstrained MT domains. 
\begin{thm}
Let $D$ be a period domain with odd weight $n>1$, $h^{n,0}=h^{n-1,1}=1$,
and $4\nmid(\dim_{\mathbb{Q}}V)$. If $\varphi$ is irreducible and
CM then $\mathfrak{g}^{1,-1}\oplus\mathfrak{g}^{0,0}\oplus\mathfrak{g}^{-1,1}\cap\mathfrak{g}_{\mathbb{Q}}\subseteq\mathfrak{g}^{0,0}$\end{thm}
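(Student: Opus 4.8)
The plan is to argue by contradiction. Suppose there is a nonzero $v\in(\mathfrak{g}^{1,-1}\oplus\mathfrak{g}^{0,0}\oplus\mathfrak{g}^{-1,1})\cap\mathfrak{g}_{\mathbb{Q}}$ whose $\mathfrak{g}^{-1,1}$-component (equivalently, since $v$ is real, its $\mathfrak{g}^{1,-1}$-component) is nonzero; I want to deduce that the hypotheses $h^{n,0}=h^{n-1,1}=1$ and $4\nmid\dim_{\mathbb{Q}}V$ are incompatible with this. Throughout I use the level function $\ell$ on the embeddings of $L$, where $\theta\in\prod^{p,n-p}$ means $\ell(\theta)=p$; note $\ell(\bar\theta)=n-\ell(\theta)$, that $\ell^{-1}(n)=\{\theta_a\}$ and $\ell^{-1}(n-1)=\{\theta_b\}$ are singletons, and that $\mathrm{Gal}(L^c/\mathbb{Q})$ acts transitively on the embeddings and sends $\hat{w}_{i,j}$ to $\hat{w}_{g(i),g(j)}$.

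First I would localize the problem at the top. Writing $v$ in the Hodge--Galois basis of $\mathfrak{g}_{\mathbb{C}}$ and using $\dim V^{n,0}=\dim V^{n-1,1}=1$, the only entry of $v$ that can carry $V^{n,0}$ into $V^{n-1,1}$ is the coefficient of $\hat{w}_{b,a}$; call it $c_1$. The support hypothesis (levels in $\{-1,0,1\}$) means every nonzero entry $\hat{w}_{i,j}$ of $v$ satisfies $|\ell(\theta_i)-\ell(\theta_j)|\le 1$. Because $v$ is rational it is $\mathrm{Gal}(L^c/\mathbb{Q})$-fixed, so if $c_1\neq 0$ then for every $g$ the conjugate entry $\hat{w}_{g(b),g(a)}$ also occurs with nonzero coefficient $g(c_1)$, forcing $|\ell(g\theta_a)-\ell(g\theta_b)|\le 1$ for all $g$.

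The next step is to extract a rigid combinatorial object. Since $\theta_a$ and $\theta_b$ turn out to have the same stabilizer (this I would prove from the displayed inequality together with the uniqueness of the level-$n$ and level-$(n-1)$ embeddings: any $g$ with $g\theta_b=\theta_a$ is forced to send $\theta_a$ to $\theta_b$), the assignment $g\theta_a\mapsto g\theta_b$ is a well-defined $\mathrm{Gal}$-equivariant permutation $\sigma$ of the embeddings; it is a fixed-point-free involution with $\sigma(\theta_a)=\theta_b$ and $|\ell(\theta)-\ell(\sigma\theta)|\le 1$ for all $\theta$. Complex conjugation $c$ is a second fixed-point-free involution, and equivariance gives $\sigma c=c\sigma$. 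I would then study the orbits of $\langle\sigma,c\rangle$ on the $\dim_{\mathbb{Q}}V$ embeddings: each has size $2$ or $4$ (size $1$ is excluded since both involutions are fixed-point-free), the orbit through $\theta_a$ has size $4$, and a size-$2$ orbit is a conjugate pair $\{\theta,\bar\theta\}$ with $\sigma\theta=\bar\theta$, which by the inequality and the oddness of $n$ must sit at the two central levels $\frac{n-1}{2},\frac{n+1}{2}$. Counting modulo $4$ and using $4\nmid\dim_{\mathbb{Q}}V$ then forces the number of size-$2$ orbits to be odd, in particular nonzero.

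The hard part will be converting this forced central self-conjugate block into an actual contradiction: at the level of the level function alone the configuration is consistent (already for weight $5$ with $\dim V=6$ it is), so I expect the symplectic data to be essential here --- concretely the relation $Q_{-i}=-Q_i$ forced by the polarization being alternating, which pins down the Hodge--Galois generator $\hat{w}_{-i,i}$ attached to a central self-conjugate pair, together with the Galois-compatibility of the coefficients $\alpha_{i,j}\in\Theta(L^c)$. Equivalently, one can run the whole argument through the graph $\Gamma_v$ and the partition $\pi_v$, which by the imprimitivity theorem is a block system for the Galois action; then $\sigma$ and $c$ record how conjugation permutes these blocks, and the parity obstruction becomes the statement that the block containing the unique vertex $w_a$ cannot be completed consistently when $4\nmid\dim_{\mathbb{Q}}V$. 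Once $c_1=0$ is established, the same Galois-transitivity argument applied to every off-diagonal entry --- each entry's orbit meeting the pinned top configuration --- propagates the vanishing to all of $\mathfrak{g}^{1,-1}\oplus\mathfrak{g}^{-1,1}$, leaving $v\in\mathfrak{g}^{0,0}$ as claimed.
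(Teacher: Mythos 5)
Your reduction to the top of the Hodge decomposition is sound and parallels the paper's: the uniqueness of the level-$n$ and level-$(n-1)$ embeddings plus Galois-invariance of $v$ pins any nonzero off-diagonal entry to the pair $(\theta_a,\theta_b)$, and your map $\sigma(g\theta_a)=g\theta_b$ is indeed well defined, $\mathrm{Gal}(L^c/\mathbb{Q})$-equivariant, and a fixed-point-free involution commuting with conjugation $c$ (conjugation being central for CM fields). But there is a genuine gap, and you flag it yourself: you never convert the forced size-$2$ orbit into a contradiction, and your guess that the missing input is symplectic (the relation $Q_{-i}=-Q_i$) points in the wrong direction. The missing idea is purely group-theoretic and is exactly what the paper extracts from imprimitivity (its Theorem 4.2 together with Dodson's theorem): since $\sigma$ and $c$ are Galois-equivariant, the $\langle\sigma,c\rangle$-orbits are permuted by $\mathrm{Gal}(L^c/\mathbb{Q})$, i.e., they form a system of imprimitivity for a \emph{transitive} action, and hence all have the same cardinality. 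You already computed that the orbit through $\theta_a$ is $\{\theta_a,\theta_b,\bar\theta_a,\bar\theta_b\}$, of size exactly $4$ (using $n\ge 3$). Therefore every orbit has size $4$ and $4\mid\dim_{\mathbb{Q}}V$, contradicting the hypothesis; your mod-$4$ count of size-$2$ orbits becomes unnecessary. Your observation that the weight-$5$, $\dim V=6$ configuration is consistent ``at the level of the level function'' is true but is precisely what misled you: the involution you exhibit there cannot be Galois-equivariant, and equivariance --- which you had already established for your $\sigma$ --- is the constraint that kills it, with no polarization input beyond the existence of the Hodge--Galois basis.

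For comparison, the paper runs the same reduction in the language of the graph $\Gamma_v$: vertex $w_1$ can only be joined to $w_2$, a Galois argument shows $\{w_1,w_2\}$ is a full connected component, this component is a block of the imprimitive system $\pi_v$, and Dodson's theorem closes the block under conjugation to produce blocks of size $4$, whence $4\mid\dim_{\mathbb{Q}}L$. Your $\sigma$-pairing is that block system in disguise; what your write-up is missing is the one sentence ``blocks of an equivariant partition of a transitive $G$-set all have the same size,'' which is the only use being made of Dodson's theorem here. With that sentence added (and the symplectic speculation deleted), your argument closes and is essentially the paper's proof.
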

\begin{proof}
Set the Hodge-Galois basis as $\left\{ w_{1}...w_{n},w_{-1}...w_{-n}\right\} $
with $w_{1}\in H^{n,0}$ and $w_{2}\in H^{n-1,1}$. Since $r(\varphi)=1$,
there exists a $v\in\left(\mathfrak{g}^{1,-1}\oplus\mathfrak{g}^{0,0}\oplus\mathfrak{g}^{-1,1}\right)\cap\mathfrak{g}_{\mathbb{Q}}$
and $v\notin \mathfrak{g}^{0,0}$. We write $v$ in
the Hodge-Galois basis of $\mathfrak{g}_{\mathbb{C}}$: $v=\sum\alpha_{i,j}\left(\hat{w}_{i,j}+\frac{Q_{i}}{-Q_{j}}\hat{w}_{-j,-i}\right)$.
Since $h^{n,0}=h^{n-1,1}=1$ we have the following fact: 

\begin{equation}
\hat{w}_{1,m}+\frac{Q_{1}}{-Q_{m}}\hat{w}_{-m,-1}\in\mathfrak{g}^{1,-1}\oplus\mathfrak{g}^{0,0}\oplus\mathfrak{g}^{-1,1}\Longrightarrow m=2\mbox{ or }m=1.\label{eq:-3}
\end{equation}

This implies that for $\Gamma_{v}$ vertex $w_{1}$ can only be connected
to $w_{2}$ or itself. Since $v\notin F^{0}(\mathfrak{g}_{\mathbb{C}})$,
$w_{1}$ and $w_{2}$ must be connected. 

Assume $w_{2}$ is connected to $w_{n}$ with $n\neq1$ nor $2$.
Let $g\in Gal(L^{c}/\mathbb{Q})$ be the element that takes $w_{n}$ 
to $w_{1}$. By \ref{eq:-3} it follows that $g$ takes $w_{2}$  to
$w_{2}$. The same reasoning leads us to the contradiction that $g$ 
takes $w_{1}$ to $w_{1}$. It follows that that  $w_{1}$ and $w_{2}$    
are not connected to any other vertices. 

Since the two vertices form a single connected component, $\{w_{1},w_{2}\}$
is an imprimitive domain. By Dodson's imprimitivity theorem, $\{w_{1},w_{2},\bar{w}_{1},\bar{w}_{2}\}$
also generates a imprimitive partition, implying that $4|(\dim_{\mathbb{Q}}L)$,
which is a contradiction(\cite{D} pg. 3).\end{proof}

\end{document}